\documentclass{birkjour}
\usepackage[all]{xy}
\usepackage{cleveref}

\def\O{\mathcal O}

\def\im{\operatorname{Im}}
\def\id{\operatorname{id}}

\def\AA{{\mathbb A}}

\def\FF{{\mathbb F}}

\def\tt{{\mathfrak T}}

\newcommand{\Cc}{\mathcal{C}}

\def\O{{\bf O}}

\def\*C{{}^*\hspace*{-1pt}{\Cc}}

\def\text#1{{\rm {\rm #1}}}

 \def\1{\mathbf{1}}

  \def\la{\triangleright}

\def\id{\mathrm{id}}

\def\sl#1#2{\mathrm{sl}{(#1 , #2)}}
\newcommand{\np}{\raisebox{0.2ex}{\scalebox{0.6}{$\,\bullet\,$}}}

\newtheorem{theorem}{Theorem}[section]

\newtheorem{lemma}[theorem]{Lemma}
\newtheorem{proposition}[theorem]{Proposition}
\theoremstyle{definition}
\newtheorem{definition}[theorem]{Definition}
\theoremstyle{remark}

\newtheorem*{example}{Example}
\numberwithin{equation}{section}

\newcounter{rlist}

\begin{document}

\title{A note on Rota-Baxter affgebras.}

 \author{Tomasz Brzezi\'nski}
 \address{Department of Mathematics, Swansea University\\
Fabian Way,
Swansea
SA1 8EN, U.K.  \\
\and\\ Department of Mathematics, University of Bia\l ystok\\ K.\ Cio\l kowskiego 1M, 15--245 Bia\l ystok, Poland} 
  \email{T.Brzezinski@swansea.ac.uk}   

\thanks{The research is partially supported by the National Science Centre, Poland, through the WEAVE-UNISONO grant no.\ 2023/05/Y/ST1/00046.}

\subjclass{20N10; 16E40; 81R12}
\keywords{affine space; affgebra; Rota-Baxter operator; affine Lie bracket}

\begin{abstract}
An affinization of a concept of a Rota-Baxter operator or Rota-Baxter affgebras are discussed. Rota-Baxter Lie affgebras and their relation to Rota-Baxter Lie algebras are treated in more detail. In particular it is shown that every Rota-Baxter Lie affgebra is isomorphic to a Rota-Baxter Lie affgebra with a Lie algebra fibre over which the affine Rota-Baxter operator is linear. 
\end{abstract}

\maketitle

\section{Introduction}
This note is a contribution to the algebraic \textit{affinization programme} \cite{BrzRadRam:Lei}. The purpose of this programme is to formulate familiar and important also for applications in physics linear algebraic structures such as an associative, Lie, Leibniz, Poisson etc.\ algebras in an affine way, i.e.\ without making choice of the zero vector (or a frame of reference in physics applications). A general idea is to replace vector spaces by affine spaces and (multi-)linear transformations by (multi-)affine transformations. This involves working with the \textit{absolute} definition of an affine space, i.e.\ a definition which makes no use of the underlying vector space, while still allowing for recovery of the vector space once a choice of a point is made the \textit{linear} or \textit{tangent fibre}. Next, algebraic conditions for typically bi-affine operations, such as the associative law or the Jacobi identity, are redrafted in purely affine setup while making sure that the corresponding bilinear operations on fibres satisfy the familiar linear conditions.   

The algebraic affinization programme reviewed above has its origins in the Tulczyjew programme of a frame independent approach to classical mechanics \cite{Tul:fra}, with the formulation of Lie brackets on affine spaces (albeit not defined in the absolute way) in \cite{GraGra:Lie}, where the term \textit{affgebra} was also coined to describe an algebra structure on an affine space, as its first exponent. On algebraic side the programme has been boosted by finding in \cite{Brz:tru}, \cite{Brz:par} that an affine framework connects rings with \textit{braces} \cite{Rum:bra}, \cite{CedJesOkn:bra}. Consequently, affine versions of associative rings and algebras  \cite{AndBrzRyb:ext}, Lie algebras  \cite{BrzPap:Lie},\cite{AndBrzRad:Lie}, Leibniz algebras  \cite{BrzRadRam:Lei}, Poisson algebras \cite{BrzRadRam:Poi}, and most pertinently for this note, associative Rota-Baxter algebras \cite{Cht:Rot} have been described recently. In this paper we introduce not necessarily associative Rota-Baxter affgebras, relate them to Rota-Baxter algebras and have a closer look at the Lie version.

\section{Affine spaces and Rota-Baxter algebras}\label{sec.RB}
In absolute terms,  an {affine space} over a field $\FF$ is a set $\mathfrak{A}$ together with two ternary operations, $\langle -,-,-\rangle: \mathfrak{A}^3\to \mathfrak{A}$ and $-\la_--: \FF\times \mathfrak{A}^2\to \mathfrak{A}$, such that,  for all $a,b,c,d,e\in \mathfrak{A}$, $\alpha,\beta\in \FF$,
$$
\begin{aligned}
    &\langle\langle a,b,c\rangle,d,e\rangle = \langle a,b,\langle c,d,e\rangle\rangle, \quad \langle a,a,b\rangle = b, \quad \langle a,b,c\rangle = \langle c,b,a\rangle.\\
&(\alpha+\beta)\triangleright_a b = \langle \alpha \triangleright_a b,  a, \beta \triangleright_a b\rangle, \;\;\; \alpha \triangleright_a \langle b,c,d\rangle = \langle \alpha \triangleright_a b,  \alpha \triangleright_a c, \alpha \triangleright_a d\rangle,\\
&(\alpha \beta)\triangleright_a b = \alpha \triangleright_a (\beta\triangleright_a b), \quad 1\triangleright_a b = b, \quad
\alpha \triangleright_a b = \langle \alpha \triangleright_c b, \alpha \triangleright_c a, a\rangle. 
\end{aligned}
$$
We note in passing that the first four conditions imply that $0 \triangleright_a b =a$. An affine map from $\mathfrak{A}$ to $\mathfrak{B}$ is a function $f:\mathfrak{A}\to \mathfrak{B}$ that preserves both ternary operations, i.e.,
\begin{equation}\label{mor.act}
f\left(\langle a,b,c\rangle\right) = \langle f(a), f(b), f(c), \rangle, \quad f(\alpha \triangleright_a b) = \alpha \triangleright_{f(a)}f(b) .
\end{equation}
It is important to observe that, for all $o\in \mathfrak{A}$, $\mathfrak{A}$  with operations 
\begin{equation}\label{tangent}
a+b := \langle a,o,b\rangle, \qquad \alpha a = \alpha\la_o a,
\end{equation}
is a vector space, which is denoted by $T_o\mathfrak{A}$ and called a \textit{linear} or \textit{tangent fibre} at $o$. In terms of binary operations \cref{tangent}, the ternary operations come out as
\begin{equation}\label{triples}
    \langle a,b,c\rangle = a-b+c, \qquad \alpha\la_a b = \alpha b + (1-\alpha) a.
\end{equation}
Consequently, the additive formulae \cref{triples} are \textbf{independent} of the choice of $o$, despite the fact that the binary operations \cref{tangent} depend on it. The formulae 
\cref{triples} can be extended to any barycentric combinations $\sum_i \alpha_i a_i$, with $\sum_i\alpha_i =1$. Being aware that binary operations are more strongly imprinted in our minds than ternary ones, we will use the combinations of binary operations \cref{triples}
to denote the ternary ones from now on (with one exception in the next paragraph). We would like to stress that this is for convenience of  the reader not out of necessity. On the other hand, any vector space can be understood as an affine space by operations \cref{triples}.

For all $o,e\in \mathfrak{A}$, the vector spaces $T_o\mathfrak{A}$ and $T_e\mathfrak{A}$ are isomorphic canonically by the  \textit{translation map}
\begin{equation}
    \tau_o^e: T_o\mathfrak{A}\to T_e\mathfrak{A}, \qquad a\mapsto \langle a,o,e\rangle.
\end{equation}
Affine versions of  algebraic structures are required to be isomorphic on fibres as linear structures through this isomorphism (and they are). Finally, we note that any map $f$ satisfying conditions \cref{mor.act} induces a linear map $T_of: T_o\mathfrak{A}\to T_e\mathfrak{B}$, $a\mapsto f(a)-f(o)$, which is referred to as the \textit{linearization} of $f$ at $o$.

Rota-Baxter operators and algebras were introduced in \cite{Bax:ana}, \cite{Rot:Bax} and they have found widespread and remarkable applications in analysis, probability, Hamiltonian integrable systems, operad theory etc.\ (see e.g. \cite{Guo:Rot}). By a \textit{Rota-Baxter algebra} of a scalar weight $\sigma$ we mean a vector space $A$ with a bilinear operation $\np$ and a linear operator $B$ such that, for all $a,b\in A$,
\begin{equation}\label{RB}
    B(a)\np B(b) = B(B(a)\np b - \sigma\, a\np b +a\np B(b)).
\end{equation}
There are two separate classes of such algebras. Either $\sigma =0$ or if $\sigma \neq 0$ $B$ can be rescaled so that $\sigma =1$. In the latter case, the right hand side of \cref{RB} takes the form of an affine combination as in \cref{triples}, and this is the case to which we will restrict ourselves in the present note and skip stressing the weight. 

Depending on the nature of $\np$, one can deal with associative Rota-Baxter algebras, Rota-Baxter Lie algebras etc. For any $B$ satisfying \cref{RB}, the operator $R=2B-\sigma \id$ satisfies the relation
\begin{equation}\label{mCYB}
    R(a)\np R(b) = R(R(a)\np b) - \sigma^2\, a\np b +R(a\np R(b)).
\end{equation}
In the case of the Rota-Baxter Lie algebra \cref{mCYB} is a modified classical Yang-Baxter equation as discussed in \cite{Sem:cla}. As before, by rescaling this splits into the case $\sigma =0$ (the classical Yang-Baxter equation) and  $\sigma =1$ (the modified classical Yang-Baxter equation). In the latter instance we note that $R$ is defined by an affine combination of $B$ and the identity map, and hence it lends itself keenly to the affinization.
\section{Rota-Baxter affgebras vis-\'a-vis Rota-Baxter algebras}
\begin{definition}\label{def.RB.aff}
    By a {\em Rota-Baxter affgebra} we mean an affine space $\mathfrak{A}$ together with a bi-affine operation $\np:\mathfrak{A}^2\to \mathfrak{A}$ and an affine operator $\mathcal{B}:\mathfrak{A}\to \mathfrak{A}$ such that, for all $a,b,\in \mathfrak{A}$,
    \begin{equation}\label{RB-1}
    \mathcal{B}(a)\np \mathcal{B}(b) = \mathcal{B}(\mathcal{B}(a)\np b -  a\np b +a\np \mathcal{B}(b)).
\end{equation}
\end{definition}
\begin{theorem}\label{thm.RB}
    For a Rota-Baxter affgebra $(\mathfrak{A},\np,\mathcal{B})$ and any $o\in \mathfrak{A}$, let
    $$
    \cdot := T_o\np: T_o\mathfrak{A}\times T_o\mathfrak{A}\to T_o\mathfrak{A}, \quad a\cdot b = a\np b -a\np o +o\np o - o\np b,
    $$
    $$
    B := T_o\mathcal{B} : T_o\mathfrak{A}\to T_o\mathfrak{A}, \quad B(a) = \mathcal{B}(a) - \mathcal{B}(o),
    $$
be linearizations of $\np$ and $\mathcal{B}$ to $T_o\mathfrak{A}$. Then $(T_o\mathfrak{A},\cdot,B)$ is a Rota-Baxter algebra.

Conversely, given an algebra $(A,\cdot)$ and an operator $B$ on $A$, view $A$ as an affine space via \cref{triples} and define affinizations of $\cdot$ and $B$ by
$$
a\np b = a\cdot b +\lambda(b) + \mu(a) +s, \quad \mathcal{B}(a) = B(a) + r,
$$
where $\lambda, \mu$  are linear operators on $A$ and $r,s\in A$. Then $(A,\np,\mathcal{B})$ is a Rota-Baxter affgebra if and only if $(A,\cdot, B)$ is a Rota-Baxter algebra (of weight 1), and, for all $a\in A$,
\begin{subequations}\label{RB.cond}
    \begin{equation}\label{const}
     r -r\cdot r = \bar B (\lambda(r) + s +\mu(r)),
    \end{equation}
    \begin{equation}\label{right}
         \bar B \lambda B(a) =B(r\cdot a ) - r\cdot B(a),
    \end{equation}
    \begin{equation}\label{left}
         \bar B \mu B(a) =B(a\cdot r) - B(a)\cdot r,
    \end{equation}
\end{subequations}
where $\bar B = \id-B$.
\end{theorem}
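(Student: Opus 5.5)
The plan is to prove the converse statement first, by direct expansion, and then to get the first statement as a special case of it. The link between the two is that every bi-affine operation and every affine operator on an affine space, once written in the tangent fibre $T_o\mathfrak{A}$, has exactly the form in the second half of the theorem.

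\textbf{Reduction of the first part to the converse.} Fix $o$ and work in $T_o\mathfrak{A}$, where $o$ is the zero. Since $\np$ is bi-affine and $\mathcal{B}$ is affine, the formulas defining $\cdot$ and $B$ give
$$
a\np b = a\cdot b + \lambda(b) + \mu(a) + s, \qquad \mathcal{B}(a) = B(a) + r,
$$
with
$$
\lambda(b) = o\np b - o\np o,\quad \mu(a) = a\np o - o\np o,\quad s = o\np o,\quad r = \mathcal{B}(o).
$$
Here $\cdot$ is bilinear and $B,\lambda,\mu$ are linear, because they are linearizations of affine maps. Hence the first assertion follows from the ``only if'' half of the converse, applied to $A = T_o\mathfrak{A}$.

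\textbf{Expanding both sides of \cref{RB-1}.} Next I expand both sides using bilinearity of $\cdot$ and linearity of $B,\lambda,\mu$. The left-hand side is
$$
(B(a)+r)\np(B(b)+r) = B(a)\cdot B(b) + B(a)\cdot r + r\cdot B(b) + r\cdot r + \lambda B(b) + \lambda(r) + \mu B(a) + \mu(r) + s.
$$
For the right-hand side, first compute the argument $X = \mathcal{B}(a)\np b - a\np b + a\np \mathcal{B}(b)$. The terms $\lambda(b)$, $\mu(a)$ and two copies of $s$ cancel, leaving
$$
X = B(a)\cdot b - a\cdot b + a\cdot B(b) + r\cdot b + a\cdot r + \mu B(a) + \lambda B(b) + \lambda(r) + \mu(r) + s.
$$
The right-hand side is then $\mathcal{B}(X) = B(X) + r$.

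\textbf{Separating the identity into components.} Both sides are sums of a bilinear part in $(a,b)$, a part linear in $a$ alone, a part linear in $b$ alone, and a constant. Evaluating at $(0,0)$, then at $(a,0)$ and $(0,b)$, and subtracting separates these components. This avoids any rescaling argument, which could fail over small fields.
\begin{itemize}
\item The bilinear part gives exactly \cref{RB} with $\sigma = 1$, i.e.\ that $(A,\cdot,B)$ is a Rota-Baxter algebra.
\item The constant part reads $r\cdot r + c = B(c) + r$ with $c = \lambda(r) + s + \mu(r)$, which is \cref{const}.
\item The $b$-part reads $r\cdot B(b) + \lambda B(b) = B(r\cdot b) + B\lambda B(b)$, which rearranges to \cref{right}.
\item The $a$-part gives \cref{left} in the same way.
\end{itemize}
Conversely, if the Rota-Baxter identity and \cref{RB.cond} hold, summing the four component identities recovers \cref{RB-1}, which gives the ``if'' direction.

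\textbf{Main obstacle.} The only real difficulty is bookkeeping: making sure that in $X$ the $\lambda(b)$, $\mu(a)$ and $s$ terms cancel correctly, and that the affine combination $\mathcal{B}(a)\np b - a\np b + a\np\mathcal{B}(b)$ is legitimately computed with the binary operations \cref{triples}. That it is legitimate is guaranteed by the fact that \cref{triples} is independent of the choice of base point.
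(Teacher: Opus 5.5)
Your proof is correct. For the converse it is the paper's argument with the bookkeeping written out. The paper likewise evaluates \cref{RB-1} at $a=b=0$, then at $a=0$ and at $b=0$, and is left with the Rota-Baxter identity. Your expansions of $\mathcal{B}(a)\np\mathcal{B}(b)$ and of $X$ check out, and the components yield exactly \cref{const}, \cref{right}, \cref{left} and \cref{RB} with $\sigma=1$.

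The difference lies in the first statement. The paper proves it intrinsically. It writes $B(a)\cdot B(b)$ as the signed sum of the four products $\mathcal{B}(x)\np\mathcal{B}(y)$, with $x\in\{a,o\}$ and $y\in\{b,o\}$, and applies \cref{RB-1} to each. It then pulls $\mathcal{B}$ out of a combination whose coefficients sum to zero, which is where the correction term $-\mathcal{B}(o)$ appears.

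You instead first split $\np$ and $\mathcal{B}$ into the fibre data $\lambda,\mu,s,r$; these are the recovery formulas the paper only records after the theorem. You then read off the first statement as the bilinear component of the ``only if'' direction. Your route is more economical: one computation serves both halves and makes the later one-to-one correspondence with decorated Rota-Baxter algebras immediate. The paper's calculation needs no auxiliary data and shows exactly where affineness, rather than linearity, of $\mathcal{B}$ enters.
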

\begin{proof}
    The first statement is proven by a straightforward calculation with key steps as follows ($\cdot$ denoted by juxtaposition): 
    $$
    \begin{aligned}
      B(a)B(b) = &~   \mathcal{B}(a)\np \mathcal{B}(b) - \mathcal{B}(a)\np \mathcal{B}(o) -\mathcal{B}(o)\np \mathcal{B}(b) + \mathcal{B}(o)\np \mathcal{B}(o)\\
      = &~ \mathcal{B}(\mathcal{B}(a)\np b -  a\np b +a\np \mathcal{B}(b)) - \mathcal{B}(\mathcal{B}(a)\np o -  a\np o +a\np \mathcal{B}(o))\\
      &-\mathcal{B}(\mathcal{B}(o)\np b -  o\np b +o\np \mathcal{B}(b)) +\mathcal{B}(\mathcal{B}(o)\np o -  o\np o +o\np \mathcal{B}(o)) - \mathcal{B}(o) \\
      = &~ \mathcal{B}(\mathcal{B}(a) b - \mathcal{B}(o) b -  ab +a \mathcal{B}(b) - o \mathcal{B}(b)) -\mathcal{B}(o)\\
      = &~ B(B(a)b - ab+ aB(b)).
    \end{aligned}
    $$
    The point of note here is the appearance of the term $-\mathcal{B}(o)$ as a result of the fact that $\mathcal{B}$ respects only affine or barycentric combinations of elements of $\mathfrak{A}$.

    To prove the second statement, one evaluates  \cref{RB-1} at $a=b=0$ to obtain \cref{const}, then taking the latter into account, the evaluation of \cref{RB-1} at $a=0$ produces \cref{right}, then at $b=0$ gives \cref{left} and one is left with the Rota-Baxter condition for $B$.
\end{proof}

    In fact Theorem~\labelcref{thm.RB} establishes a two-way correspondence between Rota-Baxter affgebras and Rota-Baxter algebras with additional data $\lambda,\mu,s,r$ on the vector fibres. One can easily check that $\mathfrak{A}$ can be recovered from $A:=T_o\mathfrak{A}$ by setting 
    $$
    \lambda(a) = o\np a -o\np o, \quad \mu(a) = a\np o -o\np o, \quad s = o\np o, \quad r = \mathcal{B}(o).
    $$
This allows one to relate completely Rota-Baxter affgebras to Rota-Baxter algebras and thus to produce examples or even classifications of the former based on examples of the latter. The maps $\lambda, \mu$ satisfy additional conditions depending on the type of the algebra. We will turn our attention to this presently, but first we discuss a couple of examples that are independent of the algebra type. 

\begin{example}
    If $B=\id$, then conditions \cref{right}  and \cref{left} are automatically satisfied (as $\bar B =0$), while \cref{const} means that $r$ is an idempotent in $(A,\cdot)$. Hence, for all $o\in \mathfrak{A}$, $\mathcal{B} = \id -o+ r$ with $r$ an idempotent in $T_o\mathfrak{A}$ makes $(\mathfrak{A},\np)$ a Rota-Baxter affgebra. 
    
    At the other extreme, $(T_o\mathfrak{A}, B=o)$ is a Rota-Baxter algebra. Again,  \cref{right}  and \cref{left} are automatically fulfilled. 
The condition \labelcref{const} means that $r$ is an idempotent in $(\mathfrak{A},\np)$. Thus, for any idempotent $r$ in  $(\mathfrak{A},\np)$,  $(\mathfrak{A},\np, \mathcal{B}=r)$ is a Rota-Baxter affgebra.
\end{example}

As explained in \cite{AndBrzRyb:ext}, an affgebra $(\mathfrak{A},\np)$ associated to an algebra $(A,\cdot)$ and data $\lambda,\mu,s$ is associative if and only if  for all $a,b\in A$,
\begin{equation}\label{homothety}
\begin{aligned}
    \lambda(ab)=&\lambda(a)b, \quad \mu(ab) = a\mu(b), \quad a\lambda(b)=\mu(a)b, \quad \lambda\mu=\mu\lambda,\\ 
    \lambda(s)&= \mu(s),\quad  \lambda^2(a) = \lambda(a) + sa, \quad  \mu^2(a) = \mu(a) + as, 
\end{aligned}
\end{equation}
where the multiplication $\cdot$ is denoted by juxtaposition. Therefore, by \cref{thm.RB} \textit{associative Rota-Baxter affgebras}  $(\mathfrak{A},\np,\mathcal{B})$ are in one-to-one correspondence with associative Rota-Baxter algebras $(A,\cdot, B)$ equipped with data $\lambda,\mu,r,s$ that satisfy conditions \cref{RB.cond} and \cref{homothety}.

\begin{example}[Associative Rota-Baxter affgebras with trivial algebra fibres] 
There are four non-isomorphic associative affgebras corresponding to an algebra $A$ with the zero multiplication, $a\cdot b =0$, for all $a,b\in A$; see \cite{AndBrzRyb:ext}. In all cases $s=0$. Any linear operator $B$ on $A$ is a Rota-Baxter operator. The restrictions on corresponding affine Rota-Baxter operators are listed in the table below.
\begin{table}[h]
        \centering
        \begin{tabular}{|c|c|c|}
        \hline
        Affine multiplication & Data $\lambda,\mu$ & Rota-Baxter restrictions\\
       \hline \hline
           $ a\np b =a$ & $\lambda=0$, $\mu=\id$ &  $B(r)=0$, \quad $B^2=B$ \\
           \hline 
           $ a\np b =b$ & $\lambda=\id$, $\mu=0$ &  $B(r)=0$, \quad $B^2=B$ \\ \hline 
           $ a\np b =a+b$ & $\lambda=\mu=\id$ &  $2B(r)=r$, \quad $B^2=B$ \\ \hline 
           $ a\np b =0$ & $\lambda=\mu=0$ &  $r=0$ \\ \hline 
        \end{tabular}
    \end{table}

\end{example}

It is natural to attribute the term homomorphism to any affine transformation between Rota-Baxter algebras that preserves the binary operations and Rota-Baxter operators. In case $\mathfrak{A}$ is determined by a Rota-Baxter algebra $(A,\cdot,B)$ and the datum $(\lambda,\mu,s,r)$ in Theorem~\labelcref{thm.RB}, while $\mathfrak{A'}$ is determined by a Rota-Baxter algebra $(A',\cdot,B')$ and the datum $(\lambda',\mu',s',r')$, homomorphisms $\varphi: \mathfrak{A}\to \mathfrak{A'}$ correspond bijectively to pairs consisting of a linear map $\psi: A\to A'$ and $q'\in A'$ such that, for all $a,b\in A$,
$$
\psi(a\cdot b) = \psi(a)\cdot\psi(b), \qquad B'\psi = \psi B,
$$
i.e.\ $\psi$ is a homomorphism of Rota-Baxter algebras, and
\begin{subequations}\label{RB-hom}
    \begin{equation}\label{hom.affgebra}
\begin{aligned}
    &\psi(s) = (\lambda'+\mu')(q')+ {q'}^2-q' +s',\\
    \psi\mu(a) = \psi(a)&q' +\mu'\psi(a), \quad \psi\lambda(a) = q'\psi(a) +\lambda'\psi(a),
\end{aligned}     
    \end{equation}
    \begin{equation}\label{hom.RB}
        \psi(r) = r'-\bar B (q').
    \end{equation}
\end{subequations}
\Cref{hom.affgebra} guarantees that $\varphi$ is a homomorphism of affgebras (i.e.\ the binary operations are preserved by $\varphi$), and \cref{hom.RB} ensures the preservation of affine Rota-Baxter operators. The necessity and sufficiency of these conditions are established by a routine evaluation of the required equalities at $a=b=0$, $a=0$ and $b=0$ as in the proof of the second statement in Theorem~\labelcref{thm.RB}.

\section{Rota-Baxter Lie affgebras with simple fibres}
In this section we describe the relation between Rota-Baxter Lie affgebras and Rota-Baxter Lie algebras and illustrate it in detail by describing fully Rota-Baxter Lie affgebras with simple Lie fibres.
\begin{definition}[\cite{BrzPap:Lie}, \cite{AndBrzRad:Lie}]\label{def.Lie}
    A \textit{Lie affgebra} is an affine space $\mathfrak{a}$ together with a bi-affine operation $\{-,-\}$, called the \textit{Lie bracket}, such that, for all $X,Y,Z \in \mathfrak{a}$, 
    \begin{subequations}\label{Lie}
        \begin{equation}\label{anti}
            \{X,Y\} =  \{X,X\} - \{Y,X\} + \{Y,Y\},
        \end{equation}
        \begin{equation}\label{Jacobi}
        \begin{aligned}
            \{X,\{Y,Z\}\} &- \{X,\{X,X\}\} + \{Y,\{Z,X\}\}\\ 
            &= \{Y,\{Y,Y\}\} -\{Z,\{X,Y\}\} + \{Z,\{Z,Z\}\}.
        \end{aligned}
            \end{equation}
    \end{subequations}
\end{definition}
The first condition is the affine antisymmetry and the second one is an affine version of the Jacobi identity.  The connection between Lie affgebras and Lie algebras is studied in \cite{AndBrzRad:Lie}. In particular, the linearization of the affine Lie bracket $\{-,-\}$ on each fibre induces a Lie algebra; these Lie algebras are mutually isomorphic in a canonical way. On the other hand, if $(\mathfrak{g},[-,-])$, then the affinization of the Lie bracket $[-,-]$:
$$
\{X,Y\} = [X,Y] + \kappa(X) + \lambda(Y-X) +s,
$$
where $s\in\mathfrak{g}$ and $\kappa,\lambda \in \mathrm{Lin}(\mathfrak{g})$, becomes an affine Lie bracket on $\mathfrak{g}$ if and only if
\begin{equation}\label{gen.der}
  \lambda\left([X,Y]\right) = [\lambda(X),Y] - [X,\kappa(Y)] + [X,\lambda(Y)],
\end{equation}
for all $X,Y\in \mathfrak{g}$. Every Lie affgebra with the Lie algebra fibre $\mathfrak{g}$ is isomorphic (as a Lie affgebra) to $\mathfrak{g}$ with the bracket as in \labelcref{gen.der}; it is then denoted by $\mathfrak{a}(\mathfrak{g};\kappa,\lambda ,s)$.

\begin{definition}\label{def.RBLie}
    A Rota-Baxter affgebra $(\mathfrak{a},\{-,-\},\mathcal{B})$ where $\{-,-\}$ is an affine Lie bracket  is called a \textit{Rota-Baxter Lie affgebra}.
\end{definition}

In view of the discussion around \cref{mCYB} one can easily  justify the following affinization of the modified classical Yang-Baxter equation.

\begin{lemma}\label{lem.mCYB}
     If $\mathrm{char}\, \mathbb{F} \neq 2$, then there is a one-to-one correspondence between Rota-Baxter Lie affgebras $(\mathfrak{a},\{-,-\},\mathcal{B})$ and solutions $\mathcal{R}$ to the \emph{affine modified classical Yang-Baxter equation}
     \begin{equation}\label{a.mCYBE}
          \{\mathcal{R}(X), \mathcal{R}(Y)\} = \mathcal{R}(\{\mathcal{R}(X), Y\}) -  \{X, Y\} +\mathcal{R}(\{X, \mathcal{R}(Y)\}).
     \end{equation}
\end{lemma}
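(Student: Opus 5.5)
The plan is to transport the linear correspondence $R=2B-\sigma\,\id$ of \cref{mCYB} (with $\sigma=1$) to the affine setting, using the fact that $2B(X)-X$ is a barycentric combination, since $2+(-1)=1$. Given an affine operator $\mathcal{B}$ on $\mathfrak{a}$, define
$$
\mathcal{R}(X) := 2\la_X \mathcal{B}(X) = 2\mathcal{B}(X) - X,
$$
and conversely, given $\mathcal{R}$, define
$$
\mathcal{B}(X) := \tfrac12\la_X \mathcal{R}(X) = \tfrac12\mathcal{R}(X)+\tfrac12 X .
$$
The second formula is where $\mathrm{char}\,\mathbb{F}\neq 2$ is needed. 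These assignments are mutually inverse by direct substitution. Each of them is affine, because a pointwise barycentric combination of affine maps (here $\mathcal{B}$ or $\mathcal{R}$ together with $\id$) is again affine. This can be checked on one tangent fibre $T_o\mathfrak{a}$, where affine maps are exactly linear maps plus constants.

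It then remains to show that $\mathcal{B}$ satisfies \cref{RB-1} if and only if $\mathcal{R}$ satisfies \cref{a.mCYBE}. I would do this computation directly with barycentric combinations, using only that $\{-,-\}$ is bi-affine. Being bi-affine means $\{-,-\}$ distributes over barycentric combinations in each argument separately, while $\mathcal{B}$ and $\mathcal{R}$ preserve barycentric combinations. Expanding the left-hand side of \cref{a.mCYBE} gives
$$
\{\mathcal{R}(X),\mathcal{R}(Y)\} = 4\{\mathcal{B}(X),\mathcal{B}(Y)\} - 2\{\mathcal{B}(X),Y\} - 2\{X,\mathcal{B}(Y)\} + \{X,Y\},
$$
where the coefficients sum to $1$. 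Next expand $U=\{\mathcal{R}(X),Y\}$ and $V=\{X,\mathcal{R}(Y)\}$ in the same way and write the right-hand side $\mathcal{R}(U)-\{X,Y\}+\mathcal{R}(V)$ as $2\mathcal{B}(U)-U+2\mathcal{B}(V)-V-\{X,Y\}$. Since $\mathcal{B}$ is affine, $2\mathcal{B}(U)+2\mathcal{B}(V)-2\mathcal{B}(\{X,Y\}) = 2\mathcal{B}(U-\{X,Y\}+V)$ up to the balancing term. A short check gives
$$
U-\{X,Y\}+V = 2W - \{X,Y\}, \qquad W:=\{\mathcal{B}(X),Y\}-\{X,Y\}+\{X,\mathcal{B}(Y)\},
$$
so the right-hand side becomes $4\mathcal{B}(W) - 2\{\mathcal{B}(X),Y\} - 2\{X,\mathcal{B}(Y)\} + \{X,Y\}$, once the resulting $\mathcal{B}(\{X,Y\})$ terms cancel.

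Comparing the two expansions, the difference between the two sides of \cref{a.mCYBE} equals $4$ times the difference between the two sides of \cref{RB-1}, computed in any fibre $T_o\mathfrak{a}$. Since $4$ is invertible in characteristic $\neq 2$, the two equations are equivalent. No Lie axiom is used, so the correspondence in fact holds for arbitrary bi-affine operations.

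The main obstacle I expect is the bookkeeping in the previous step: making sure every intermediate expression is a genuine barycentric combination, so that $\mathcal{B}$ can be pulled through it. The sub-step to watch is that a stray constant term, like the $-\mathcal{B}(o)$ in the proof of Theorem~\ref{thm.RB}, cancels exactly. To keep this safe, I would first fix a fibre $T_o\mathfrak{a}$ and write $\mathcal{B}=B+\mathcal{B}(o)-o$, so that the check reduces to ordinary vector-space algebra there.
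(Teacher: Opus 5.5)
Your proposal is correct and uses the same correspondence as the paper, $\mathcal{R}=\mathcal{B}-\id+\mathcal{B}$ with inverse $\mathcal{B}(X)=\frac12\la_X\mathcal{R}(X)$. The paper only states these maps and leaves the verification to the linear identity behind \cref{mCYB}; your barycentric expansion, showing that the two sides of \cref{a.mCYBE} differ by $4$ times the defect of \cref{RB-1}, carries out that check explicitly, and your remark that no Lie axiom is used (so the correspondence holds for any bi-affine operation) is also right.
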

\begin{proof}
    The correspondence is given by 
    $$
    \begin{aligned}
        \mathcal{B}\mapsto \mathcal{R} = \mathcal{B}- \id + \mathcal{B}, \qquad \mathcal{R}(X)\mapsto \mathcal{B}(X) &= \frac12 \la_X \mathcal{R}(X)\\
        &= \frac12 \mathcal{R}(X) - \frac12 X + X.
    \end{aligned}  
    $$
    This presentation is independent of the choice of $o$ in $\mathfrak{a}$.
\end{proof}

We can now apply Theorem~\labelcref{thm.RB} to construct Rota-Baxter Lie affgebras. Given a Rota-Baxter operator $B$ on a Lie algebra $\mathfrak{g}$, i.e.\ a linear operator $B:\mathfrak{g}\to \mathfrak{g}$, such that, for all $X,Y\in\mathfrak{g}$,
\begin{equation}\label{RB.op.Lie}
    [B(X),B(Y)] = B\left([B(X),Y]-[X,Y]+ [X,B(Y)]\right),
\end{equation}
as well as data $\kappa,\lambda, s$ leading to the Lie affgebra $\mathfrak{a}(\mathfrak{g};\kappa,\lambda,s)$, the affine operator $\mathcal{B}$, $X\mapsto B(X) + r$ is an affine Rota-Baxter operator on $\mathfrak{a}(\mathfrak{g};\kappa,\lambda,s)$, provided that
\begin{equation}\label{RB.Lie.con}
    r=\bar B(\kappa(r) +s) , \quad \bar B\kappa B = 0, \quad  \bar B\lambda B =B\mathrm{ad}_r - \mathrm{ad}_rB,
\end{equation}
where $\mathrm{ad}_r=[r,-]$ is the adjoint action of $r$ on $\mathfrak{g}$ and $\bar B = \id-B$.

\begin{lemma}\label{lem.Lie}
    The conditions \labelcref{RB.Lie.con} are equivalent to 
    \begin{equation}\label{RB.Lie.con.alt}
    r=\bar B\left(\kappa^n(r) +\sum_{l=0}^{n-1}\kappa^l(s)\right) , \quad \bar B\kappa B = 0, \quad  \bar B(\lambda +\mathrm{ad}_{\kappa(r)+s})B =0,
\end{equation}
for all positive integers $n$.
\end{lemma}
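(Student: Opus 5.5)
The plan is to treat the three conditions separately. The middle condition $\bar B\kappa B=0$ is common to \labelcref{RB.Lie.con} and \labelcref{RB.Lie.con.alt}, so it can be assumed throughout and used as a tool. Two consequences drive everything:
\[
\bar B\kappa=\bar B\kappa\bar B+\bar B\kappa B=\bar B\kappa\bar B,
\]
and the abbreviation $t:=\kappa(r)+s$, with which the first condition in \labelcref{RB.Lie.con} reads $r=\bar B(t)=t-B(t)$.

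\emph{First condition.} The direction from \labelcref{RB.Lie.con.alt} to \labelcref{RB.Lie.con} is immediate by taking $n=1$. For the converse I will prove by induction on $m\ge 0$ that
\[
\bar B\kappa^m(r)=\bar B\left(\kappa^{m+1}(r)+\kappa^m(s)\right).
\]
The case $m=0$ is exactly the hypothesis $r=\bar B(\kappa(r)+s)$, since $\bar B(r)=r$ follows from it via $\bar B\kappa=\bar B\kappa\bar B$ applied as below. More carefully, the base case I actually need is $r=\bar B(\kappa(r)+s)$ itself. For the inductive step, suppose $\bar B\kappa^{m-1}(r)=\bar B(\kappa^m(r)+\kappa^{m-1}(s))$. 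Then
\[
\bar B\kappa^m(r)=\bar B\kappa\bigl(\kappa^{m-1}(r)\bigr)=\bar B\kappa\bar B\bigl(\kappa^{m-1}(r)\bigr)=\bar B\kappa\bar B\bigl(\kappa^m(r)+\kappa^{m-1}(s)\bigr)=\bar B\bigl(\kappa^{m+1}(r)+\kappa^m(s)\bigr),
\]
where the final equality uses $\bar B\kappa\bar B=\bar B\kappa$ again. Starting from $r=\bar B(\kappa(r))+\bar B(s)$ and repeatedly substituting these identities for $\bar B\kappa^m(r)$, the sum telescopes to $r=\bar B\bigl(\kappa^n(r)+\sum_{l=0}^{n-1}\kappa^l(s)\bigr)$ for every $n\ge 1$.

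\emph{Third condition.} Assuming the first condition, so that $r=t-B(t)$, it suffices to show
\[
B\,\mathrm{ad}_r-\mathrm{ad}_rB=-\bar B\,\mathrm{ad}_tB .
\]
Once this holds, $\bar B\lambda B=B\mathrm{ad}_r-\mathrm{ad}_rB$ is equivalent to $\bar B(\lambda+\mathrm{ad}_{\kappa(r)+s})B=0$. To verify the identity, expand
\[
[r,B(X)]=[t,B(X)]-[B(t),B(X)],
\]
and rewrite $[B(t),B(X)]$ using the Rota-Baxter identity \labelcref{RB.op.Lie} as $B\bigl([B(t),X]-[t,X]+[t,B(X)]\bigr)$. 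Similarly,
\[
B[r,X]=B[t,X]-B[B(t),X].
\]
Subtracting, the terms $B[B(t),X]$ and $B[t,X]$ cancel, leaving $-[t,B(X)]+B[t,B(X)]=-\bar B\,\mathrm{ad}_tB(X)$.

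Since the equivalence of the first conditions holds in both directions given $\bar B\kappa B=0$, and the third equivalence only needs the $n=1$ form of the first condition, the two systems are equivalent. The only delicate step is the inductive telescoping. There the key trick is to insert $\bar B$ via $\bar B\kappa=\bar B\kappa\bar B$ so that the inductive hypothesis can be substituted inside.
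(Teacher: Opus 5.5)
Your route is the same as the paper's. You rewrite the $\lambda$-condition using the first condition together with the Rota--Baxter identity, and your computation $B\,\mathrm{ad}_r-\mathrm{ad}_rB=-\bar B\,\mathrm{ad}_tB$ is correct. You obtain the $n$-fold form of the first condition by induction, and you rightly make explicit that this needs $\bar B\kappa B=0$; the paper's sketch attributes it to the first equation alone.

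There is, however, one false step in your induction: its base case.

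\begin{itemize}
\item The $m=0$ instance of your claim, $\bar B(r)=\bar B(\kappa(r)+s)$, does not hold in general, and neither does $\bar B(r)=r$. Take $\mathfrak g=\FF$ with zero bracket, $\kappa=\lambda=0$, $B=\beta\,\id$ with $\beta\neq 0,1$, and $s\neq 0$. All three conditions hold with $r=(1-\beta)s$, yet $\bar B(r)=(1-\beta)^2s\neq r$.
\item Your inductive step at $m=1$ substitutes exactly this $m=0$ statement inside $\bar B\kappa\bar B(\cdot)$, so it is unjustified as written.
\end{itemize}

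Your remark that the base you really need is $r=\bar B(t)$ is the right instinct, but it has to be used differently. Start the induction at $m=1$: substitute $r=\bar B(t)$ and then apply $\bar B\kappa\bar B=\bar B\kappa$, which gives $\bar B\kappa(r)=\bar B\kappa\bar B(t)=\bar B\kappa(t)=\bar B(\kappa^2(r)+\kappa(s))$. Your step for $m\ge 2$ and the telescoping, which only uses the cases $m\ge 1$, then go through unchanged.
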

\begin{proof}
    The alternative form of the condition for $\lambda$ is obtained by combining the third with the first conditions in \cref{RB.Lie.con}, and the Rota-Baxter equation \labelcref{RB.op.Lie}. The rule for $r^n$ follows recursively from the first equation in  \labelcref{RB.Lie.con} and can be proven by induction.
\end{proof}

With a Rota-Baxter operator $B$ on a Lie algebra $\mathfrak{g}$ one can associate four Lie subalgebras of $\mathfrak{g}$:
$$
\mathfrak{g}_+ = \im B, \quad \mathfrak{g}_- = \im \bar B, \quad \mathfrak{h}_+ = \ker \bar B, \quad \mathfrak{h}_- = \ker B;
$$
see \cite[Section~4]{Sem:cla}. Obviously, $\mathfrak{g} = \mathfrak{g}_+ +  \mathfrak{g}_- $ and $\mathfrak{h}_\pm\subseteq \mathfrak{g}_\pm$. Having this notation at hand we can phrase the second and third of \cref{RB.Lie.con.alt} as $\kappa(\mathfrak{g}_+)\subseteq \mathfrak{h}_+$ and $(\lambda +\mathrm{ad}_{\kappa(r)+s})(\mathfrak{g}_+)\subseteq \mathfrak{h}_+$. The first of conditions \labelcref{RB.Lie.con.alt} implies that necessarily $r\in \mathfrak{g}_-$. In fact, up to isomorphism, one can always set $r=0$.

\begin{proposition}\label{prop.Lie.RB.lin}
Any Rota-Baxter Lie affgebra with the Rota-Baxter Lie algebra fibre $(\mathfrak{g},B)$ is isomorphic to a Rota-Baxter Lie affgebra $(\mathfrak{a}(\mathfrak{g};\kappa,\lambda,s), B)$, where necessarily
\begin{equation}\label{RBLie.red}
    \bar B\kappa B = \bar B \lambda B =0 \quad \mbox{and} \quad B(s) = s\in \im\kappa \cap \mathfrak{g}_+.
\end{equation}
\end{proposition}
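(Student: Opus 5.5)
The plan is to use the freedom of choosing the base point (equivalently, the translation isomorphisms described after the associative examples) to kill the translational part $r$ of $\mathcal{B}$, and then to read off the remaining constraints from \cref{RB.Lie.con} and \cref{RB.Lie.con.alt}. By Theorem~\labelcref{thm.RB} and the classification of Lie affgebras recalled after Definition~\labelcref{def.Lie}, we may start from a Rota-Baxter Lie affgebra of the form $(\mathfrak{a}(\mathfrak{g};\kappa_0,\lambda_0,s_0),\ X\mapsto B(X)+r)$ whose data satisfy \cref{RB.Lie.con}. As observed before the proposition, the first of \cref{RB.Lie.con.alt} with $n=1$ forces $r\in\mathfrak{g}_-=\im\bar B$. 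We therefore choose $q\in\mathfrak{g}$ with $\bar B(q)=r$.

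Next I would consider the affine bijection $\varphi:\mathfrak{g}\to\mathfrak{g}$, $X\mapsto X-q$, with linear part $\psi=\id$. I would transport the bracket and the operator along $\varphi$. For the operator, the Lie analogue of \cref{hom.RB} gives the new translational part $r'=\psi(r)+\bar B(q')$ with $q'=-q$, so $r'=r-\bar B(q)=0$. Thus the transported operator is the linear map $B$ itself, and the fibre Rota-Baxter algebra $(\mathfrak{g},B)$ is unchanged. For the bracket, the Lie counterpart of \cref{hom.affgebra} should give new data of the form
$$\kappa=\kappa_0+\text{(terms in }\mathrm{ad}_q),\qquad \lambda=\lambda_0-\mathrm{ad}_q\ (\text{up to sign}),\qquad s=s_0+\text{(terms in }q).$$
These new data again satisfy \cref{gen.der}, because $\varphi$ is an isomorphism of Lie affgebras. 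I will only need that the result is some $\mathfrak{a}(\mathfrak{g};\kappa,\lambda,s)$, so the exact formulas are bookkeeping.

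With $r=0$, the conditions \cref{RB.Lie.con} read
$$0=\bar B(s),\qquad \bar B\kappa B=0,\qquad \bar B\lambda B=B\,\mathrm{ad}_0-\mathrm{ad}_0B=0.$$
These give $\bar B\kappa B=\bar B\lambda B=0$ and $B(s)=s$, hence $s\in\mathfrak{h}_+\subseteq\mathfrak{g}_+$. Feeding $r=0$ into \cref{RB.Lie.con.alt} also gives $\bar B\bigl(\sum_{l<n}\kappa^l(s)\bigr)=0$ for all $n$, i.e.\ $\kappa^l(s)\in\mathfrak{h}_+$ for every $l$.

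The remaining claim, $s\in\im\kappa$, is the step I expect to be the main obstacle, since it does not come from the Rota-Baxter conditions. It must come from the affine Jacobi identity \cref{Jacobi} for $\mathfrak{a}(\mathfrak{g};\kappa,\lambda,s)$. My first attempt would be to expand \cref{Jacobi} with $\{X,Y\}=[X,Y]+\kappa(X)+\lambda(Y-X)+s$ and collect the terms that are constant or of degree one in a single variable. For instance, I would put $Y=Z=0$ and compare the parts linear in $X$, hoping to obtain a relation of the type $s=\kappa(s)-\lambda(s)+\ldots$ or $\lambda(s)=\kappa(\cdot)$ that exhibits $s$ as an element of $\im\kappa$. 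If the bare identity is insufficient, I would exploit the residual freedom in $q$: it is determined only modulo $\ker\bar B=\mathfrak{h}_+$. Shifting $q$ by an element of $\mathfrak{h}_+$ keeps $r=0$ and modifies $s$, and one would try to use this to arrange $s\in\im\kappa$. The word ``necessarily'' in the statement suggests, however, that the Jacobi expansion alone already suffices.
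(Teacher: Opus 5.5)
There is a genuine gap, and it concerns the claim $s\in\im\kappa$. The rest of your outline matches the paper's proof and is fine: you translate by a $q$ with $\bar B(q)=r$, which exists because the first of \cref{RB.Lie.con} puts $r$ in $\mathfrak g_-$, and you then read $\bar B\kappa B=\bar B\lambda B=0$ and $B(s)=s$ off \cref{RB.Lie.con} with $r=0$.

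Your main plan is to extract $s\in\im\kappa$ from the affine Jacobi identity. This cannot work. As recalled after Definition~\ref{def.Lie}, $\mathfrak{a}(\mathfrak{g};\kappa,\lambda,s)$ is a Lie affgebra for every $s$ once \cref{gen.der} holds, so \cref{Jacobi} puts no constraint on $s$. For a concrete counterexample, take $\kappa=\lambda=0$, $B=\id$, $r=0$ and any $s\neq 0$. This is a Rota-Baxter Lie affgebra with $r=0$, yet $s\notin\im\kappa=0$. So $s\in\im\kappa$ does not follow from the axioms for an arbitrary representative with $r=0$. It is a normalisation obtained by a specific choice of $q$, and ``necessarily'' refers only to the normalised representative built in the proof.

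Your fallback, using the residual freedom $q\mapsto q+h$ with $h\in\mathfrak{h}_+$, points the right way. To carry it out, you need the exact transformation formulas you set aside as bookkeeping. For $\varphi(X)=X-q$, a direct computation gives
$$\kappa'=\kappa,\qquad \lambda'=\lambda+\mathrm{ad}_q,\qquad s'=s-q+\kappa(q),\qquad r'=r-\bar B(q).$$
In particular, $\kappa$ does not acquire $\mathrm{ad}_q$-terms. It is the linear part of $X\mapsto\{X,X\}=\kappa(X)+s$ and is invariant under translations, which is what makes $\im\kappa$ a meaningful target.

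Now choose $q=\kappa(r)+s$. This choice is admissible precisely because the first of \cref{RB.Lie.con} says $\bar B(\kappa(r)+s)=r$, so $r'=0$. It then gives $s'=\kappa(\kappa(r)+s-r)\in\im\kappa$. Finally, $B(s')=s'$ follows from the first of \cref{RB.Lie.con} with $r'=0$. This is exactly the paper's choice: there $\Phi=\id+q$ with $q=-\kappa(r)-s$, and the sign difference is only the convention of $\Phi$ versus your $\varphi$.
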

\begin{proof}
    In view of the discussion at the end of Section~\cref{sec.RB} and \cite[Corollary~3.5]{AndBrzRad:Lie} any isomorphism of Rota-Baxter Lie affgebras 
    $$
    \Phi: (\mathfrak{a}(\mathfrak{g};\kappa,\lambda,s), \mathcal{B}=B+r)\longrightarrow (\mathfrak{a}(\mathfrak{g};\kappa',\lambda',s'), \mathcal{B}' = B'+r'),
    $$
    is given as $\Phi = \Psi + q$, where $q\in \mathfrak{g}$ and $\Psi$ is a Lie algebra automorphism such that
    $$
    \kappa' = \Psi\kappa\Psi^{-1}, \quad \lambda' = \Psi(\lambda - \mathrm{ad}_{q})\Psi^{-1}, \quad B'= \Psi B\Psi^{-1},
    $$
    $$
    s' = \Psi(s+q - \kappa(q)), \qquad r' = \Psi(r +\bar B(q)).
    $$
    Set $\Psi = \id$ and $q = -\kappa(r) -s$, and note that since $r$ must satisfy the first of equations \cref{RB.Lie.con}, $r=-\bar B(q)$. As a consequence
    $$
    \kappa' =\kappa,\quad \lambda' =\lambda +\mathrm{ad}_{\kappa(r)+s}, \quad B' = B,\quad s' = \kappa(s+\kappa(r) -r), \quad   r'=0.
    $$
    In view of the last of \cref{RB.Lie.con.alt}, $\bar B \lambda ' B =0$, and it is also clear that, necessarily, $s'\in \im\kappa$. Furthermore, since  $r'=0$  and $s'$ have to satisfy the first \cref{RB.Lie.con}, $B(s') = s'$. This completes the proof that, up to isomorphism, any Rota-Baxter Lie affgebra over $(\mathfrak{g},B)$  has the stated form.
\end{proof}

Note that the first two equalities in \labelcref{RBLie.red} can be stated equivalently as $\kappa(\mathfrak{g}_+) \subseteq \mathfrak{g}_+$ and $\lambda(\mathfrak{g}_+) \subseteq \mathfrak{g}_+$. \Cref{prop.Lie.RB.lin} means that, up to isomorphism, every Rota-Baxter Lie affgebra $(\mathfrak{a},\{-,-\},\mathcal{B})$ contains a point, say $e$, over which $\mathcal{B}$ is a Rota-Baxter operator over the Lie algebra $(T_e\mathfrak{a}, T_e\{-,-\})$.
In other ways, the constant part of $\mathcal{B}$ can always be gauged away. Consequently, the classification of Rota-Baxter Lie affgebras reduces to the classification of Rota-Baxter Lie algebras in which Rota-Baxter operators satisfy conditions \cref{RBLie.red} with respect to the additional data $\kappa,\lambda,s$.

\begin{example}[Rota-Baxter Lie affgebras with simple fibres]\label{ex.simple}
    Let $(\mathfrak{a},\{-,-\})$ be a complex Lie affgebra with fibres isomorphic to a simple Lie algebra $\mathfrak{g}$.  By \cite[Proposition 4.2]{AndBrzRad:Lie}, up to isomorphism the affine Lie bracket $\{-,-\}$ is necessarily of the form
    $$
    \{X,Y\} = [X,Y] + \gamma Y +s,
    $$
    for some $\gamma\in \mathbb{C}$ and $s\in \mathfrak{g}$, i.e. $\kappa = \lambda = \gamma \,\id$. In view of \Cref{prop.Lie.RB.lin}, up to isomorphism, any affine Rota-Baxter operator on $(\mathfrak{a},\{-,-\})$ is equal to a (linear) Rota-Baxter operator $B$ on $\mathfrak{g}$ such that $B(s)=s$ and $\gamma B^2 = \gamma B$. So, if $\gamma \neq 0$, $B$ is necessarily an idempotent, while for $\gamma =0$ there are no restrictions on $B$ as long as  $B(s)=s$.

    By \Cref{lem.mCYB}, up to isomorphism,  any solution to the affine modified classical Yang-Baxter equation in this case is equal to a solution $R$ of the modified classical Yang-Baxter such that $R(s)=s$ and $\gamma R^2 = \gamma \id$.

    One can explore this example further. In case of $\gamma=0$, $B$ can be taken as one of the Rota-Baxter operators derived from the Belavin-Drinfeld classification of solutions to the  modified classical Yang-Baxter equation; see \cite{BelDri:sol} for the original paper or \cite[Section~3.1]{ChaPre:gro} for a comprehensive exposition. Such operators correspond to Belavin-Drinfeld triples $(\Gamma_1,\Gamma_2,\tau)$, where $\Gamma_i$ are subsets of simple roots of $\mathfrak{g}$, and $\tau: \Gamma_1\to \Gamma_2$ is the Dynkin diagram isomorphism or bijective Killing form isometry such that for all $\alpha\in \Gamma_1$ there exists a positive integer $k$ such that $\tau^k(\alpha)\not\in \Gamma_1$. Let $\Gamma_1^+$ denote the set of positive roots generated by $\Gamma_1$. The resulting Rota-Baxter operator has the property that for all  $\alpha\not\in\Gamma_1^+$, $B(X_\alpha) = X_\alpha$, where $X_\alpha$ denotes an element of the Cartan-Weyl basis for $\mathfrak{g}$, corresponding to the root $\alpha$. In conclusion, $B$ is a Rota-Baxter operator on a Lie affgebra $\{X,Y\} = [X,Y]  +s$, provided $s\in \mathrm{span}\{X_\alpha\; |\; \mbox{$\alpha$ is positive and $\alpha\not\in \Gamma_1^+$}\}$.

    If $\gamma\neq 0$, then $B$ determines the vector space splitting  $\mathfrak{g}=\mathfrak{g}_+\oplus\mathfrak{g}_-$,  so that
    $$
    B\mid_{\mathfrak{g}_+} = \id, \quad B\mid_{\mathfrak{g}_-} = 0, \quad s\in {\mathfrak{g}_+}.
    $$
\end{example}

    We conclude by making an observation that similarly to Rota-Baxter Lie algebras, an affine Rota-Baxter operator on a Lie affgebra yields a modification of the bracket and leads to a homomorphic Rota-Baxter Lie affgebra.

\begin{proposition}\label{prop.mod.Lie}
If $(\mathfrak{a}, \{-,-\}, \mathcal{B})$ is a Rota-Baxter Lie affgebra, then the bi-affine operation, given by 
\begin{equation}\label{mod.Lie}
    \{X,Y\}_{\mathcal{B}} = \{\mathcal{B}(X),Y\} - \{X,Y\}+ \{X,\mathcal{B}(Y)\},
\end{equation}
for all $X,Y\in\mathfrak{a}$, is an affine Lie bracket on $\mathfrak{a}$. Furthermore, $\mathcal{B}$ is a homomorphism of Lie affgebras $(\mathfrak{a}, \{-,-\})\to (\mathfrak{a}, \{-,-\}_{\mathcal{B}})$, and a Rota-Baxter operator on $(\mathfrak{a}, \{-,-\}_{\mathcal{B}})$.
\end{proposition}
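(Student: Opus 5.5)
The plan is to reduce everything to the linear fibre and invoke the classical result for Rota-Baxter Lie algebras of weight one. First, bi-affinity of \cref{mod.Lie} is immediate: $\{X,Y\}_{\mathcal{B}}$ is an affine (barycentric) combination of three bi-affine maps, because $\mathcal{B}$ is affine and $\{-,-\}$ is bi-affine. Hence $\{-,-\}_{\mathcal{B}}$ is determined by its linearization at any chosen point $o$.

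By \Cref{prop.Lie.RB.lin}, up to isomorphism we may assume $\mathfrak{a}=\mathfrak{a}(\mathfrak{g};\kappa,\lambda,s)$ with $\mathcal{B}=B$ linear and satisfying \cref{RBLie.red}. Since \cref{mod.Lie} is built from $\mathcal{B}$ and $\{-,-\}$ only, isomorphisms of Rota-Baxter Lie affgebras transport $\{-,-\}_{\mathcal{B}}$, so this reduction is harmless. Alternatively, we avoid the reduction and linearize at an arbitrary $o$, using $T_o\{X,Y\}_{\mathcal{B}}$. As in the proof of Theorem~\ref{thm.RB}, the extra $\mathcal{B}(o)$ terms cancel and we obtain
$$
T_o\{X,Y\}_{\mathcal{B}} = [B(X),Y]-[X,Y]+[X,B(Y)] =: [X,Y]_B .
$$
The classical fact (Semenov-Tian-Shansky) is that $[-,-]_B$ is a Lie bracket whenever $B$ satisfies \cref{RB.op.Lie}. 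By the correspondence of \cite{AndBrzRad:Lie} between Lie affgebras and Lie algebra fibres, it then suffices to check the affine antisymmetry \cref{anti}, which follows termwise from \cref{anti} for $\{-,-\}$. One must also verify the affine Jacobi identity \cref{Jacobi}. I would do this by writing $\{X,Y\}_{\mathcal{B}} = [X,Y]_B + \kappa_B(X)+\lambda_B(Y-X)+s_B$ in the reduced model, where, with $r=0$,
$$
\kappa_B=\kappa B+\kappa\bar B+\dots,\qquad s_B = 2s - s = s,
$$
and similarly for $\lambda_B$. The task is then to verify the criterion \cref{gen.der} for the data $(\kappa_B,\lambda_B)$ relative to $[-,-]_B$.

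I expect this verification of \cref{gen.der} to be the main obstacle. Here I would first try to exploit \cref{gen.der} for $(\kappa,\lambda)$ together with $\bar B\kappa B=\bar B\lambda B=0$. If that becomes messy, the fallback is a direct bare-hands check of \cref{Jacobi} in the affine language. That check expands every bracket $\{-,\{-,-\}_{\mathcal{B}}\}_{\mathcal{B}}$ and uses \cref{RB-1} to replace each $\mathcal{B}(\{\ldots\}_{\mathcal{B}})$ by $\{\mathcal{B}(\cdot),\mathcal{B}(\cdot)\}$.

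The homomorphism property is direct: \cref{RB-1} states exactly $\mathcal{B}(\{X,Y\}_{\mathcal{B}}) = \{\mathcal{B}(X),\mathcal{B}(Y)\}$, and $\mathcal{B}$ is affine. The final statement requires
$$
\{\mathcal{B}(X),\mathcal{B}(Y)\}_{\mathcal{B}} = \mathcal{B}\big(\{\mathcal{B}(X),Y\}_{\mathcal{B}} - \{X,Y\}_{\mathcal{B}} + \{X,\mathcal{B}(Y)\}_{\mathcal{B}}\big).
$$
To prove it, apply $\mathcal{B}$ to both sides of the argument. The right-hand side becomes the affine combination $\{\mathcal{B}^2X,\mathcal{B}Y\}-\{\mathcal{B}X,\mathcal{B}Y\}+\{\mathcal{B}X,\mathcal{B}^2Y\}$, obtained by using the homomorphism property three times and expanding. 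The left-hand side, expanded by \cref{mod.Lie}, is literally the same combination. Affineness of $\mathcal{B}$ guarantees that it distributes over these barycentric combinations.
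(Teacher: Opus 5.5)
Your strategy is viable and differs from the paper, which only says the result follows from a tedious direct calculation left to the reader. You reduce via \Cref{prop.Lie.RB.lin} to a model with $\mathcal B=B$ linear, use the classical fact that $[X,Y]_B=[B(X),Y]-[X,Y]+[X,B(Y)]$ is a Lie bracket, and then check the affine criterion. Your treatment of bi-affinity, antisymmetry and the Rota--Baxter property of $\mathcal B$ for $\{-,-\}_{\mathcal B}$ is fine.

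\textbf{The gap.} The central claim, the affine Jacobi identity \cref{Jacobi} for $\{-,-\}_{\mathcal B}$, is never proved. The Lie property of the linearization together with \cref{anti} does not suffice, so ``it then suffices to check the affine antisymmetry'' is false. A bi-affine operation is not determined by its linearization, and its affine part must satisfy \cref{gen.der} relative to $[-,-]_B$. At exactly that point you stop and call it the main obstacle. The data you do write down are also wrong: your $\kappa_B=\kappa B+\kappa\bar B+\dots$ already contains $\kappa B+\kappa\bar B=\kappa$. Expanding in the reduced model ($r=0$) gives
$$
\{X,Y\}_{\mathcal B}=[X,Y]_B+\kappa B(X)+\lambda B(Y-X)+s,
$$
so $\kappa_B=\kappa B$, $\lambda_B=\lambda B$ and $s_B=s$.

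\textbf{How to close it.} With these data your first suggested strategy works in a few lines. By \cref{RBLie.red}, every $Z$ in the image of $\kappa B$ or $\lambda B$ satisfies $B(Z)=Z$. For such $Z$ one has $[Z,Y]_B=[Z,B(Y)]$ and $[X,Z]_B=[B(X),Z]$. Hence the right-hand side of \cref{gen.der} for $(\kappa_B,\lambda_B)$ and $[-,-]_B$ is
$$
[\lambda B(X),B(Y)]-[B(X),\kappa B(Y)]+[B(X),\lambda B(Y)]=\lambda\big([B(X),B(Y)]\big)=\lambda B\big([X,Y]_B\big).
$$
The first equality is \cref{gen.der} for $(\kappa,\lambda)$ at the pair $(B(X),B(Y))$, and the second is \cref{RB.op.Lie}. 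With this inserted, your route is shorter than a brute-force affine computation. The price is that it relies on \Cref{prop.Lie.RB.lin} and on the description of Lie affgebras by data $(\kappa,\lambda,s)$.

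\textbf{Direction of the homomorphism.} The identity you invoke, $\mathcal B(\{X,Y\}_{\mathcal B})=\{\mathcal B(X),\mathcal B(Y)\}$, makes $\mathcal B$ a homomorphism $(\mathfrak a,\{-,-\}_{\mathcal B})\to(\mathfrak a,\{-,-\})$. This is the reverse of the direction printed in the statement. The printed direction is false in general. On $\mathfrak{sl}_2(\mathbb C)$, take $B$ to be the projection onto $\mathrm{span}\{E,H\}$ along $\mathbb C F$; it satisfies \cref{RB.op.Lie} because both summands are subalgebras. Then $B([E,F])=H$, while $[B(E),B(F)]_B=0$. You should state explicitly that you are proving the corrected direction.
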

\begin{proof}
    This is proven by a straightforward if somewhat tedious calculation and is left to the reader as an exercise.
\end{proof}
\section*{Acknowledgments} 
The results described in this note were presented at the LMS Midlands Regional Meeting and Workshop ``Algebraic structures orbiting the Yang-Baxter equation" held in Keele, Staffordshire UK, 1--4 September 2026. I would like to thank Paul Truman for kind invitation and wonderful hospitality.

\section*{Note added} While the finishing touches were being put to the present text, the author had become aware of the arXiv preprint \cite{HanKhu:Rot} (version 1 of October 10 2026, and version 2 of October 18 2026), where Rota-Baxter operators on Lie affgebras are discussed in significant detail, and there is some overlap with Section~4 of the present note (e.g.\ conditions in \cref{RB.op.Lie} and \cref{RB.Lie.con} above are equivalent to conditions (13--14) in \cite[Corollary~2.16]{HanKhu:Rot}, while \Cref{prop.mod.Lie} is contained in \cite[Theorem~2.15]{HanKhu:Rot}).

\end{document}